\documentclass[a4paper,12pt]{amsart}
\usepackage[latin1]{inputenc}
\usepackage{exscale}
\usepackage{amsmath}
\usepackage{amsfonts}
\usepackage{amssymb}
\usepackage{mathrsfs}
\usepackage{verbatim}
\usepackage{enumerate}
\usepackage{bbm}
\usepackage{graphics}
\usepackage{vmargin}

\usepackage{ifpdf}
\ifpdf 
\pdfcompresslevel=9
\usepackage[pdftex]{hyperref} 
\else
\usepackage[hypertex]{hyperref}
\fi

\hypersetup{
citecolor=green,
menucolor=red,
citebordercolor=0 1 0,
linkbordercolor=1 0 0,
pdfpagemode=UseOutlines,
pdftitle={A note on Berestycki-Cazenave's classical instability result
for nonlinear Schr\"odinger equations},
pdfauthor={Stefan LE COZ <slecoz@univ-fcomte.fr>},
pdfsubject={A note on Berestycki-Cazenave's classical instability result
for nonlinear Schr\"odinger equations},
pdfkeywords={nonlinear Schrödinger equations; standing waves; instability; blow-up; variational methods}
}

\setmarginsrb {3.5cm}{3cm}{3.5cm}{4cm} {1cm}{1cm}{1cm}{1cm}

\relpenalty=10000
\binoppenalty=10000
\sloppy

\newtheorem{defi}{Definition}
\newtheorem{thm}{Theorem}

\newtheorem{remq}[defi]{Remark}

\newtheorem{lem}[defi]{Lemma}


\newcommand{\nld}[1]{\|#1\|_2}
\newcommand{\nldd}[1]{\|#1\|_2^2}

\newcommand{\nhu}[1]{\|#1\|_{\hu}}



\renewcommand{\a}{\alpha}
\renewcommand{\b}{\beta}
\renewcommand{\d}{\delta}
\newcommand{\e}{\varepsilon}
\newcommand{\f}{\varphi}

\renewcommand{\k}{\chi}
\renewcommand{\l}{\lambda}

\newcommand{\w}{\omega}

\newcommand{\D}{\Delta}

\newcommand{\G}{\Gamma}


\newcommand{\R}{\mathbb{R}}
\newcommand{\RN}{\mathbb{R}^N}
\newcommand{\C}{\mathbb{C}}


\newcommand{\hu}{H^1(\RN)}
\newcommand{\hmu}{H^{-1}(\RN)}

\newcommand{\fl}{\f^{\l}}

\newcommand{\uo}{u_0}
\newcommand{\ul}{u^{\l}}

\newcommand{\vb}{v^{\b}}
\newcommand{\vbo}{v^{\bo}}
\newcommand{\vbot}{v^{\bot}}

\newcommand{\vl}{v^{\l}}
\newcommand{\vlo}{v^{\lo}}
\newcommand{\vlu}{v^{\lu}}


\newcommand{\dtt}{\partial_{tt}}
\newcommand{\intrn}{\int_{\RN}}


\newcommand{\lo}{\l_0}
\newcommand{\lu}{{\l_1}}
\newcommand{\lnd}{{\l^{\frac{N}{2}}}}
\newcommand{\bo}{\b_0}
\renewcommand{\bot}{\tilde{\bo}}

\newcommand{\dM}{d_{\M}}
\newcommand{\dw}{d(\w)}
\newcommand{\M}{\mathscr{M}}

\newcommand{\Tuo}{T_{\uo}}

\renewcommand{\leq}{\leqslant}
\renewcommand{\geq}{\geqslant}

\newcommand{\goesto}{\rightarrow}

\subjclass[2000]{35Q55,(35B35,35A15)}

\begin{document}

\title[On Berestycki-Cazenave's instability result
for NLS]{A note on Berestycki-Cazenave's classical instability result
for nonlinear Schr\"odinger equations}

\date\today

\author{Stefan LE COZ}
\address[Stefan LE COZ]{Laboratoire de Math\'{e}matiques,
\endgraf
Universit\'{e} de Franche-Comt\'{e}, 25030 Besan\c{c}on, FRANCE}
\email{slecoz@univ-fcomte.fr}

\begin{abstract}
In this note we give an alternative, shorter proof of the
classical result of Berestycki and Cazenave on the
instability by blow-up for the standing waves of some nonlinear
Schr\"odinger equations.
\end{abstract}

\maketitle

\section{Introduction}
In 1981, in a celebrated note \cite{bc1}, Berestycki and Cazenave
studied the instability of standing waves for the
nonlinear Schr\"odinger equation
\begin{equation}\label{nlsg}
iu_t+\D u+ |u|^{p-1}u=0
\end{equation}
where $u=u(t,x)\in\C$, $t\in\R$, $x\in\RN$ and $p>1$. A standing wave is a
solution of (\ref{nlsg}) of the form $e^{i\w t}\f(x)$ with $\f\in\hu$ and $\w>0$. Thus $\f$ is solution of
\begin{equation}\label{snlsg}
-\D \f+\w \f=|\f|^{p-1}\f,\quad \f\in\hu.
\end{equation}
We say that $\f \in \hu$ is a ground state solution of (\ref{snlsg}) if it
satisfies
$$
\tilde{S}(\f)= \inf\{ \tilde{S}(v);v\in\hu\setminus\{0\} \mbox{ is a solution of
(\ref{snlsg}) }\}
$$
where $\tilde{S}$ is defined for $v\in\hu$ by
$$
\displaystyle \tilde{S}(v) :=  \frac{1}{2}\nldd{\nabla
v}+\frac{\w}{2}\nldd{v}- \frac{1}{p+1}\intrn |v|^{p+1}dx.
$$
In \cite{bc1} it is shown that if $1+ \frac{4}{N} < p <1+
\frac{4}{N-2}$ when $N \geq 3$ and $1+ \frac{4}{N} < p<+\infty$ when
$N=1,2$, then any standing wave associated with a ground state solution $\f$ of (\ref{snlsg}) is unstable by blow up. More precisely, there  exists $(\f_n
)\subset \hu$ such that $\f_n \to \f$ in $\hu$ and the corresponding
maximal solution $u_n$ of (\ref{nlsg}) with $u_n (0) = \f_n$ blows
up in finite time.
\medskip

The result and perhaps more the methods introduced in \cite{bc1}
still have a deep influence on the field of instability
for nonlinear Schr\"odinger and related equations. In particular
the idea of defining appropriate invariant sets and how to use
them to establish the blow-up.  We should mention that in
\cite{bc1} more general nonlinearities were considered. The
paper \cite{bc1} is only a short note which contains the main ideas
but no proofs. For the special nonlinearity $|u|^{p-1}u$ these
proofs can be found in \cite{c}. For the general case it seems
that the extended version \cite{bc2} of \cite{bc1} has remained
unpublished so far.
\medskip

The aim of the present note is to present an alternative, shorter proof of the
result of \cite{bc1} for general nonlinearities. Also the instability of the standing waves is proved under
slightly weaker assumptions. Before stating our result we need to introduce
some notations. Let $g:\R\mapsto \R$ be an odd function extended to $\C$ by
setting $g(z)=g(|z|)z/|z|$ for $z\in\C\setminus\{ 0 \}$. Equation (\ref{nlsg})
now becomes
\begin{equation}\label{nlsgg}
iu_t+\D u+g(u)=0
\end{equation}
and correspondingly for the ground states we have
\begin{equation}\label{snlsgg}
-\D \f+\w \f=g(\f).
\end{equation}
For $z\in\C$ let $G(z):=\int_0^{|z|}g(s)ds$. We assume
\begin{itemize}
\item[$(A_0)$]
The function $g$ satisfies
\begin{itemize}
\item [(a)] $g\in\mathcal{C}(\R,\R)$.
\item [(b)] $\lim_{s \to 0}\frac{g(s)}{s}=0.$
\item [(c)]
\begin{itemize}
\item[]\hspace{-1cm} when $N\geq 3$, $\lim_{s\goesto+\infty}g(s)s^{-\frac{N+2}{N-2}}=0$; 
\item[]\hspace{-1cm} when $N=2$, for any $\a>0$, there exists $C_\a>0$ such that $|g(s)|\leq C_\a e^{\a s^2}$ for all $s>0$.
\end{itemize}
\end{itemize}
\item[$(A_1$)] The function $h(s):=(sg(s)-2G(s))s^{-(2+4/N)}$ is strictly increasing on $(0,+\infty)$ and
$\lim_{s\goesto 0}h(s)=0$.
\item[$(A_2)$] There exist $C>0$ and $\alpha \in [0, \frac{4}{N-2})$ if $N \geq 3$,  $\alpha \in [0, \infty)$ if $N=2$, such that $$|g(s)-g(t)|\leq
C(1+|s|^\alpha+|t|^\alpha)|t-s|$$ for all $s,t\in\R$. If $N=1$ we
assume that for every $M >0$, there exists $L(M) >0$ such that
$$ |g(s)-g(t)| \leq L(M) |s-t|$$
for all $s,t \in \R$ such that $|s| + |t| \leq M.$
\end{itemize}
Finally we define for $v \in \hu$ the functional
$$
\displaystyle S(v) :=  \frac{1}{2}\nldd{\nabla
v}+\frac{\w}{2}\nldd{v}- \intrn G(v) dx
$$
and set
$$
m := \inf\{ S(v);v\in\hu\setminus\{0\} \mbox{ is a solution of (\ref{snlsgg})
}\}.
$$

Our main result is

\begin{thm}\label{bc1thm}
Assume that $(A_0)-(A_2)$ hold and let $\f$ be a ground state
solution of (\ref{snlsgg}), i.e. a solution of (\ref{snlsgg}) such
that $S(\f) =m$. Then for every $\e>0$ there exists $\uo\in\hu$
such that $\nhu{\uo-\f}<\e$ and the solution $u$ of
(\ref{nlsgg}) with $u(0)=\uo$ satisfies
$$
\lim_{t\goesto \Tuo}\nld{\nabla u(t)}=+\infty\mbox{ with }
\Tuo<+\infty.
$$
\end{thm}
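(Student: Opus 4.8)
The plan is to follow the classical Berestycki--Cazenave strategy via invariant sets under the flow, combined with the virial identity. First I would recall the relevant functional framework. Besides $S$, introduce the scaling-derivative functional $P(v) := \nldd{\nabla v} - \frac{N}{2}\intrn\bigl(v g(v) - 2G(v)\bigr)dx$, which is (up to a constant) $\frac{d}{d\lambda}\big|_{\lambda=1}S(v^\lambda)$ for the $L^2$-critical-type dilation $v^\lambda(x) := \lambda^{N/2} v(\lambda x)$; note $P(v)$ also equals twice the virial quantity $\frac{1}{4}\frac{d^2}{dt^2}\intrn |x|^2|u(t,x)|^2\,dx$ evaluated appropriately. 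The key variational fact I would establish (or cite, if proved earlier) is that $m = \inf\{S(v) : v\in\hu\setminus\{0\},\ P(v)=0\}$ and that this infimum is attained precisely at ground states; assumption $(A_1)$, forcing $h$ strictly increasing with $h(0^+)=0$, is exactly what makes the functional $v\mapsto S(v)-\frac{1}{2+4/N}P(v)$ (or a similar combination) positive and monotone along the dilation, so that any ground state $\f$ is a strict local-to-global minimizer on the constraint $\{P=0\}$.

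Next I would construct the initial data. Take $\uo := \f^{\lambda}$ for $\lambda > 1$ close to $1$, i.e. $\uo(x) = \lambda^{N/2}\f(\lambda x)$. Then $\nhu{\uo - \f} \to 0$ as $\lambda\to 1$, so $\uo$ is $\e$-close to $\f$ for $\lambda$ near $1$; since $\f$ is smooth and decays (standard elliptic regularity plus $(A_0)$), $\uo$ lies in $\hu$ with finite variance $\intrn|x|^2|\uo|^2\,dx < \infty$. The two crucial inequalities to check are: (i) $S(\uo) < m = S(\f)$, which follows because $\lambda\mapsto S(\f^\lambda)$ has a strict maximum at $\lambda=1$ (a consequence of $P(\f)=0$ together with the monotonicity from $(A_1)$); and (ii) $P(\uo) < 0$, which again follows from the sign of $\frac{d}{d\lambda}S(\f^\lambda)$ for $\lambda>1$. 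Thus $\uo$ belongs to the set $\mathcal{I} := \{v\in\hu : S(v) < m,\ P(v) < 0\}$ (intersected with finite-variance functions).

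The heart of the argument is then to show $\mathcal{I}$ is invariant under the flow of \eqref{nlsgg}. By conservation of energy $S$ — more precisely, conservation of the energy $E(u) = \frac{1}{2}\nldd{\nabla u} - \intrn G(u)\,dx$ and of the mass $\nldd{u}$, hence of $S$ — we have $S(u(t)) = S(\uo) < m$ for all $t$ in the maximal existence interval; here $(A_2)$ guarantees local wellposedness in $\hu$ and the conservation laws. For the sign of $P$: suppose $P(u(t_0)) = 0$ for some $t_0$; then $u(t_0)$ is a nonzero element of $\{P=0\}$, so $S(u(t_0)) \geq m$, contradicting $S(u(t_0)) < m$. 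Hence $P(u(t))<0$ is preserved as long as the solution exists (using continuity of $t\mapsto P(u(t))$ in $\hu$). Moreover I would show $P(u(t))$ stays bounded away from $0$: the quantity $m - S(v)$ controls $-P(v)$ from below on $\{S(v)\le m-\delta\}$ via the variational characterization, so there is $\delta>0$ with $P(u(t)) \leq -\delta < 0$ for all $t$. Finally, the virial/variance identity $\frac{d^2}{dt^2}\int|x|^2|u(t)|^2\,dx = c\,P(u(t)) \leq -c\delta$ for a positive constant $c$ (valid once one checks the finite-variance propagates, via a truncation/approximation argument as in Cazenave's book) forces $\int|x|^2|u(t)|^2\,dx$ to become negative in finite time unless the solution ceases to exist first — so $\Tuo < \infty$, and then necessarily $\nld{\nabla u(t)} \to \infty$ as $t\to\Tuo$ by the blow-up alternative.

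The main obstacle I expect is not the invariant-set bookkeeping but the variational step: proving that $m$ coincides with the minimum of $S$ over $\{P=0\}$ and that this is attained only at ground states, under the weak hypotheses $(A_0)$--$(A_1)$ rather than the power nonlinearity. The subtlety is that one must relate arbitrary solutions of \eqref{snlsgg} (which satisfy the Pohozaev identity $P(\f)=0$ automatically) to the constrained minimization, and run a concentration-compactness or Nehari-type argument to get existence and the identification, where $(A_1)$'s strict monotonicity of $h$ is precisely the ingredient ensuring uniqueness of the dilation maximum and strict inequalities. A secondary technical point is justifying the virial identity and finite-variance propagation under $(A_2)$, which requires the standard but delicate truncation argument; I would invoke the corresponding results from \cite{c} for this.
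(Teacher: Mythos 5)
Your overall skeleton (dilated ground state as initial datum, an invariant set of the form $\{S<m,\ Q<0\}$, a uniform negative bound on $Q(u(t))$, then the virial identity) is the classical Berestycki--Cazenave scheme, and the paper shares it. But there is a genuine gap at exactly the point you yourself flag as ``the main obstacle'': your entire argument rests on the identity $m=\inf\{S(v):v\in\hu\setminus\{0\},\ Q(v)=0\}$ (your $P$ is the paper's $Q$), and you never prove it --- you only say you would ``establish or cite'' it via ``a concentration-compactness or Nehari-type argument.'' Under the hypotheses $(A_0)$--$(A_1)$ of this paper that step is precisely what cannot be done the classical way: minimizing $S$ directly on $\{Q=0\}$ and killing the Lagrange multiplier is what forces Berestycki--Cazenave to assume $g\in\mathcal C^1$ with growth control on $g'$ and their stronger (H.1). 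So as written, the proposal either silently imports stronger hypotheses than the theorem allows, or leaves its central lemma unproved. The invariance step inherits the same problem: your deduction ``$Q(u(t_0))=0\Rightarrow S(u(t_0))\geq m$'' is exactly an application of the missing characterization.

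The paper's way around this is the main novelty you are missing. It never solves a minimization problem on $\{Q=0\}$. Instead it uses the mountain-pass characterization $m=c$ of Jeanjean--Tanaka together with the Nehari level $\dw=\inf\{S(v):I(v)=0\}$ (Lemma \ref{lem:existencegroundstate}), and then works with the \emph{two-constraint} set $\M=\{Q(v)=0,\ I(v)\leq 0\}$, showing $\dM=\dw=m$ by a soft rescaling argument. Correspondingly the invariant set carries the extra condition $I<0$: one first shows $I(u(t))<0$ persists (because $I(u(t))=0$ would force $S(u(t))\geq\dw$ via the Nehari characterization), and only then that $Q(u(t))<0$ persists (because $Q=0$ together with $I\leq 0$ puts $u(t)$ in $\M$). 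Your single-constraint set has no such mechanism. Finally, your claim that ``$m-S(v)$ controls $-Q(v)$ from below'' is asserted rather than derived; the paper obtains the uniform bound $Q(u(t))\leq S(\uo)-\dw<0$ from the concavity of $\l\mapsto S(\vl)$ on $(\lo,+\infty)$ (Lemma \ref{lemscaling}(iv)), again with the extra twist that the rescaled function $\vbo$ with $Q(\vbo)=0$ may fail to satisfy $I(\vbo)\leq 0$ and must then be pushed further to the Nehari manifold. These are not cosmetic differences: they are the reason the theorem holds under the weaker assumptions $(A_0)$--$(A_2)$.
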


From \cite{bgk,bl} we know that assumption $(A_0)$ is almost
necessary to guarantee the existence of a solution for (\ref{snlsgg}). Assumption $(A_1)$ is a weaker version of
the assumption (H.1) in \cite{bc1}. An assumption of this type,
on the growth of $g$, is necessary since it is known from
\cite{cl} that when $g(u) = |u|^{p-1}u$ with $1 < p < 1 +
\frac{4}{N}$ the standing waves associated with the ground states are, on the contrary, orbitally
stable. Assumption $(A_2)$ is purely technical and is aimed at 
ensuring the local well-posedness of the Cauchy problem for (\ref{nlsgg}).
It replaces assumption (H.2) in \cite{bc1}. Indeed, in \cite{bc1} the authors were using the
results of Ginibre and Velo \cite{gv} for that purpose. Since \cite{bc1} has been
published, advances have been done in the study of the Cauchy
problem (see \cite{c,cw} and the references therein). In
particular, under our condition $(A_2)$, for all $\uo\in\hu$ there
exist $\Tuo>0$ and a unique solution
$u\in\mathcal{C}([0,\Tuo),\hu)\cap\mathcal{C}^1([0,\Tuo),\hmu)$
of (\ref{nlsgg}) such that $\lim_{t\goesto \Tuo}\nld{\nabla
 u(t)}=+\infty$ if $\Tuo<+\infty$. Furthermore, the following
conservation properties hold : for all $t\in[0,\Tuo)$ we have
\begin{eqnarray}
S(u(t))&=&S(\uo),\label{conservationenergyg}\\
\nld{u(t)}&=&\nld{\uo}.\label{conservationchargeg}
\end{eqnarray}
Finally, the function $f:t\mapsto\nldd{xu(t)}$ is $\mathcal{C}^2$
and we have the virial identity
\begin{equation}\label{virial}
\dtt f(t)  =  8Q(u(t)),
\end{equation}
where $Q$ is defined for $v\in\hu$ by
$$
Q(v):=\nldd{\nabla v}-\frac{N}{2}\intrn(g(|v|)|v|-2G(v))dx.
$$

The proofs of instability in \cite{bc1} and here share some
elements, in particular the introduction of sets invariant under
the flow. The main difference lies in the variational
characterization of the ground states which is used to define the
invariant sets and how to derive this characterization. \medskip

In \cite{bc1} it is shown that a ground state of (\ref{snlsgg}) can
be characterized as a minimizer of $S$ on the constraint
$$
M := \{v\in\hu\setminus\{0\},Q(v)=0\}.
$$
To show this characterization, $S$ is directly minimized on $M$.
Additional assumptions (see (H.1) in \cite{bc1}) are necessary at
this step to insure that the minimizing sequences are bounded.
Once the existence of a minimizer for $S$ on $M$ has been
established, one has to get rid of the Lagrange multiplier, namely
to prove that it is zero. There, a stronger version of $(A_0)$,
requiring in particular $g \in \mathcal{C}^1(\R,\R)$ and a control on
$g'(s)$ at infinity, is necessary along with tedious calculations.
\medskip

Having established that the ground states of (\ref{snlsgg}) 
minimize $S$ on $M$, Berestycki and Cazenave show that the
set
$$ K : = \{v \in \hu, S(v) < m \mbox{ and } Q(v) <0 \}$$
is invariant under the flow of (\ref{nlsgg}) and that one can choose in $K$ an initial data,
arbitrarily close to the ground state, for which the blow-up
occurs. \medskip

In our approach we characterize the ground states as  minimizers of
$S$ on
$$
\M:=\{ v\in\hu\setminus\{0\};Q(v)=0,I(v)\leq 0 \},
$$
where $I(v)$ is defined for $v \in \hu$ by
$$
I(v) := \nldd{\nabla v}+\w\nldd{v}-\intrn g(|v|)|v| dx
$$
and correspondingly our invariant set is
$$\
\{v \in
\hu, S(v) < m, Q(v) <0 \mbox{ and } I(v)<0 \}.$$

The dominant feature of our approach, which also explains why our
assumptions on $g$ are weaker than in \cite{bc1} is that we never
explicitly solve a minimization problem. At the heart of our
approach is an additional characterization of the ground states as
being at a mountain pass level for $S$. This characterization
was derived in \cite{jt1} for $N \geq 2$ and in \cite{jt2} for
$N=1$. We also strongly benefit from recent techniques developed
by several authors \cite{lffks,l1,l2,lww,ot,z} where minimization approches
using two constraints have been introduced.

\section{Proof of Theorem 1}

We first prove the existence of ground states and the fact that they correspond to minimizers of $S$ on the Nehari manifold.

\begin{lem}\label{lem:existencegroundstate}
Assume that $(A_0)$ and $(A_1)$ hold. Then (\ref{snlsgg}) admits a
ground state solution. Furthermore, the ground states solutions of (\ref{snlsgg})
are minimizers for
$$
\dw:=\inf\left\{ S(v);v\in\hu\setminus\{0\},I(v)=0 \right\}.
$$
\end{lem}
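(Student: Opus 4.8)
The plan is to show that the Nehari level $\dw$ is attained, that a minimizer solves \eqref{snlsgg}, and that conversely every ground state sits at the level $\dw$; combining these gives that ground states exist and are precisely the Nehari minimizers. First I would observe that the Nehari constraint $I(v)=0$ is the natural one: for $v\in\hu\setminus\{0\}$ the scaling $t\mapsto I(tv)$ behaves, thanks to $(A_0)(b)$ near $0$ and $(A_1)$ for the growth of $g$, so that there is a unique $t(v)>0$ with $I(t(v)v)=0$, and $t\mapsto S(tv)$ is maximized at $t=t(v)$. Concretely, writing $\psi(t):=S(tv)$ one checks $\psi'(t)=\tfrac1t I(tv)$ and that $I(tv)>0$ for $t$ small and $I(tv)<0$ for $t$ large — here $(A_1)$, which says $h(s)=(sg(s)-2G(s))s^{-(2+4/N)}$ is strictly increasing with limit $0$ at $0$, is used (together with $sg(s)-2G(s)$ being related to $sg(s)-(N+2)/N\cdot(\ldots)$; more simply, $(A_1)$ forces superquadratic-type growth of $G$ so $\intrn g(|v|)|v|$ dominates the quadratic part for large $t$). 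This yields the equivalent mountain-pass/minimax description $\dw=\inf_{v\neq 0}\max_{t>0}S(tv)$, which is exactly the characterization from \cite{jt1,jt2}.

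Next I would take a minimizing sequence $(v_n)$ for $S$ on $\{I=0\}$. On the Nehari manifold one has $S(v_n)=S(v_n)-\tfrac12 I(v_n)=\intrn\big(\tfrac12 g(|v_n|)|v_n|-G(v_n)\big)dx$, and $(A_1)$ guarantees the integrand is positive and controls $\nhu{v_n}$ from above (this is the standard argument showing Nehari minimizing sequences are bounded; it is cleaner than the boundedness argument needed for the $M$-constraint in \cite{bc1}, which is the point of the whole approach). Boundedness in $\hu$ plus $(A_0)(c)$ (subcritical/critical growth control, Strauss-type compactness in the radial class, or concentration-compactness) yields a nonzero weak limit $v$ after translations; since the problem is autonomous I may assume $v_n$ radial and decreasing, so compactness of the embedding into $L^{p+1}_{loc}$ and into the Orlicz/Sobolev space where $G$ lives gives $\intrn G(v_n)\to\intrn G(v)$ and $\intrn g(|v_n|)|v_n|\to\intrn g(|v|)|v|$. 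Lower semicontinuity of the $\hu$ norm then gives $I(v)\leq 0$ and $S(v)\leq\dw$; if $I(v)<0$ I rescale by $t(v)<1$ to land on the manifold with $S(t(v)v)\leq S(v)\leq\dw$, forcing equality, $t(v)=1$, $I(v)=0$, and $v$ is a minimizer with $S(v)=\dw$.

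Then I would remove the Lagrange multiplier: a minimizer $v$ of $S$ under $I=0$ satisfies $S'(v)=\mu I'(v)$ for some $\mu\in\R$; pairing with $v$ and using $I(v)=0$ gives $\mu\,\dual{I'(v)}{v}=0$, and $\dual{I'(v)}{v}=2\nldd{\nabla v}+2\w\nldd v-\intrn(g(|v|)|v|+|v|^2 g'(|v|)\cdot\!\ldots)$ — more robustly, one computes $\dual{I'(v)}{v}<0$ directly from $(A_1)$ (strict monotonicity of $h$ makes $t\mapsto I(tv)$ strictly decreasing through its zero), hence $\mu=0$ and $v$ solves \eqref{snlsgg}. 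Thus $\dw\geq m$. Conversely, any solution $w$ of \eqref{snlsgg} satisfies $I(w)=0$ by pairing the equation with $w$, so $S(w)\geq\dw$, giving $m\geq\dw$; therefore $m=\dw$ and the ground states are exactly the minimizers on the Nehari manifold. The main obstacle I expect is the compactness step — obtaining a nonvanishing limit and passing to the limit in the nonlinear terms $\intrn G(v_n)$ and $\intrn g(|v_n|)|v_n|$ under only the weak growth hypotheses $(A_0)$ and $(A_1)$, which is where the restriction to radial functions and a careful use of $(A_0)(c)$ (respectively a Trudinger–Moser estimate when $N=2$) does the real work.
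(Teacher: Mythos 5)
Your proposal takes the classical route --- minimize $S$ directly on the Nehari set $\{I=0\}$, extract a convergent minimizing sequence, and kill the Lagrange multiplier --- and this is precisely the route the paper is designed to avoid, because under the weak hypotheses $(A_0)$--$(A_1)$ two of your steps do not go through. First, the boundedness of a Nehari minimizing sequence is not ``the standard argument'': on $\{I=0\}$ one has $S(v_n)=\intrn\bigl(\tfrac12 g(|v_n|)|v_n|-G(v_n)\bigr)dx=\tfrac12\intrn h(|v_n|)\,|v_n|^{2+4/N}dx$, and since $(A_1)$ only gives $h>0$ with $h(0^+)=0$ (there is no Ambrosetti--Rabinowitz inequality $G(s)\leq\theta\, g(s)s$ with $\theta<1/2$), the boundedness of this quantity controls neither $\nldd{\nabla v_n}$ nor $\nld{v_n}$. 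Second, and more seriously, the Lagrange multiplier step requires $I$ to be of class $\mathcal{C}^1$, i.e.\ essentially $g\in\mathcal{C}^1$ with control on $g'$ at infinity; $(A_0)$ only assumes $g$ continuous, so the Nehari set need not be a $\mathcal{C}^1$ manifold and the identity $S'(v)=\mu I'(v)$ --- as well as your computation of $\dual{I'(v)}{v}$, which involves $g'$ --- is not available. Your fallback (``$t\mapsto I(tv)$ decreases strictly through its zero'') shows $\mu=0$ \emph{if} the multiplier rule applies, but does not supply the rule itself. The compactness step is also genuinely problematic for $N=1$, where Strauss-type radial compactness fails. These are exactly the obstructions the paper attributes to the original Berestycki--Cazenave argument and lists as the reason for its weaker hypotheses.

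What the paper does instead is never to solve a minimization problem. Existence of a ground state is obtained by checking that $(A_0)$--$(A_1)$ imply, via the elementary Lemma \ref{lem:tech} ($g(s)/s$ increasing and unbounded), the hypothesis (P) of the classical existence theorems of \cite{bgk,bl}, which are simply cited. The easy inequality $m\geq\dw$ is as in your proposal: every nonzero solution of (\ref{snlsgg}) satisfies $I(v)=S'(v)v=0$. For the reverse inequality the paper invokes the mountain-pass characterization $m=c$ of \cite{jt1,jt2} and observes that, by (\ref{cond3}) and (\ref{cond4}), for any $v$ with $I(v)=0$ the ray $t\mapsto tv$ yields an admissible path on which the maximum of $S$ is exactly $S(v)$, whence $c\leq S(v)$ and $\dw\geq c=m$. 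If you want a self-contained minimization proof you would have to import the stronger hypotheses of \cite{bc1}; under the hypotheses actually stated, the mountain-pass detour is not optional.
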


Before proving Lemma \ref{lem:existencegroundstate}, we prove a technical result.

\begin{lem}\label{lem:tech}
Assume that $(A_0)$ and $(A_1)$ hold. Then the nonlinearity $g$ satisfies
\begin{eqnarray}
\frac{g(s)}{s}\mbox{ is increasing for }s>0.&&\label{cond3}\\
\frac{g(s)}{s} \to + \infty \mbox{ as } s \to + \infty.&&\label{cond4}
\end{eqnarray}
\end{lem}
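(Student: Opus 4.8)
The plan is to extract the two stated monotonicity/growth facts for $g$ directly from assumption $(A_1)$, which controls the quantity $sg(s)-2G(s)$ rather than $g(s)$ itself. The link between the two is the elementary identity
$$
sg(s)-2G(s)=s^2\int_0^1\!\Big(g'(\sigma s)\sigma s - g(\sigma s)\Big)\,\frac{d\sigma}{\sigma}\cdot\frac{1}{s}\,,
$$
or, more usefully and without assuming $g\in\mathcal C^1$, the observation that for the auxiliary function $\phi(s):=g(s)/s$ one has, after writing $G(s)=\int_0^s g=\int_0^s \sigma\phi(\sigma)\,d\sigma$ and integrating by parts,
$$
sg(s)-2G(s)=s^2\phi(s)-2\int_0^s \sigma\phi(\sigma)\,d\sigma .
$$
Differentiating formally in $s$ gives $\big(sg(s)-2G(s)\big)'=s^2\phi'(s)$, so that $sg(s)-2G(s)$ is, up to the factor $s^2$, a primitive of $\phi'$; hence control on the monotonicity of $s\mapsto (sg(s)-2G(s))s^{-(2+4/N)}$ should translate into control on the monotonicity of $\phi$. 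To avoid differentiability issues I would instead argue by a comparison/averaging argument: set $k(s):=sg(s)-2G(s)$ and $h(s)=k(s)s^{-(2+4/N)}$, so $k(s)=s^{2+4/N}h(s)$ with $h$ strictly increasing and $h(0^+)=0$, in particular $k(s)>0$ for $s>0$.

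First I would prove \eqref{cond3}, i.e.\ that $\phi(s)=g(s)/s$ is increasing on $(0,\infty)$. Fix $0<s_1<s_2$ and rescale: for $\sigma\in(0,1]$ and $j=1,2$, a change of variables in $G(s_j\sigma)=\int_0^{s_j\sigma}g$ gives $s_j\sigma\, g(s_j\sigma)-2G(s_j\sigma)=k(s_j\sigma)=(s_j\sigma)^{2+4/N}h(s_j\sigma)$. Dividing the identity $k(s)=s^2\phi(s)-2\int_0^s\sigma\phi(\sigma)d\sigma$ by $s^2$ and substituting $\sigma=s\tau$ yields
$$
\phi(s)=\frac{k(s)}{s^2}+2\int_0^1 \tau\,\phi(s\tau)\,d\tau .
$$
This is a fixed-point-type relation; iterating it, or rather differentiating the combination $\psi(s):=\phi(s)-\phi(0^+)$ where $\phi(0^+)=\lim_{s\to0}g(s)/s=0$ by $(A_0)(b)$, one finds $\phi(s)=\int_0^1 \tfrac{k'(?)}{\cdots}$; the cleanest route is: from $k(s)=s^2\phi(s)-2\int_0^s\sigma\phi(\sigma)\,d\sigma$ we get $\phi(s)=\frac{1}{s^2}\int_0^s \sigma^{-1}k'(\sigma)\,\sigma\,d\sigma$-type formula after an integration by parts in the other direction, namely
$$
\phi(s)=\int_0^s \frac{k(\sigma)}{\sigma^3}\,d\sigma + \text{(boundary terms that vanish by }h(0^+)=0),
$$
which one checks by differentiating the right-hand side and comparing with the displayed relation for $\phi$. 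Since $k(\sigma)/\sigma^3=\sigma^{4/N-1}h(\sigma)>0$ is strictly positive, $\phi$ is strictly increasing, which is \eqref{cond3}.

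For \eqref{cond4} I would use the same formula $\phi(s)=\int_0^s \sigma^{4/N-1}h(\sigma)\,d\sigma$ together with the fact that $h$ is increasing and positive: pick any $s_0>0$ with $h(s_0)>0$ (such $s_0$ exists since $h$ is strictly increasing from $0$), then for $s>s_0$,
$$
\phi(s)\ \ge\ \int_{s_0}^{s}\sigma^{4/N-1}h(\sigma)\,d\sigma\ \ge\ h(s_0)\int_{s_0}^{s}\sigma^{4/N-1}\,d\sigma\ =\ \frac{N}{4}\,h(s_0)\big(s^{4/N}-s_0^{4/N}\big)\ \xrightarrow[s\to\infty]{}\ +\infty ,
$$
giving \eqref{cond4}. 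The main obstacle is the first paragraph's bookkeeping: establishing the integral representation $\phi(s)=\int_0^s k(\sigma)\sigma^{-3}\,d\sigma$ rigorously without assuming $g\in\mathcal C^1$, i.e.\ justifying the boundary terms vanish using only $(A_0)(b)$ and $h(0^+)=0$, and handling the absolute continuity of $G$ and $\phi$. Once that representation is in hand, both \eqref{cond3} and \eqref{cond4} drop out immediately from the sign and monotonicity of $h$, so that is the only delicate point.
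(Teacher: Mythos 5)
Your overall strategy---reading the monotonicity and growth of $g(s)/s$ off the sign and monotonicity of $k(s):=sg(s)-2G(s)=s^{2+4/N}h(s)$---is the right one, but the integral representation on which you hang both conclusions is wrong. Writing $\phi(s)=g(s)/s$, the correct consequence of the (formal) identity $\phi'(s)=k'(s)/s^{2}$ together with $\phi(0^+)=0$ and $k(\sigma)/\sigma^{2}=\sigma^{4/N}h(\sigma)\to 0$ is
\[
\phi(s)=\frac{k(s)}{s^{2}}+2\int_0^s\frac{k(\sigma)}{\sigma^{3}}\,d\sigma ,
\]
not $\phi(s)=\int_0^s k(\sigma)\sigma^{-3}\,d\sigma$: the boundary term of the integration by parts at the \emph{upper} endpoint, $k(s)/s^{2}=s^{4/N}h(s)$, does not vanish, and you have also lost a factor $2$ on the integral. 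A test with $g(s)=s^{p}$ exposes the discrepancy: $\phi(s)=s^{p-1}$ while $\int_0^s k(\sigma)\sigma^{-3}\,d\sigma=\frac{1}{p+1}s^{p-1}$. So the ``only delicate point'' you flag is not a matter of justifying boundary terms---the formula itself is false. The conclusions do survive once the formula is corrected ($s^{4/N}h(s)$ is a product of two positive increasing functions, hence increasing, and the integral term has positive increasing integrand $\sigma^{4/N-1}h(\sigma)$, so \eqref{cond3} holds; your lower bound for \eqref{cond4} goes through on either summand), but your argument as written, which infers \eqref{cond3} solely from positivity of the integrand, is incomplete: you must also invoke the monotonicity of the extra term $s^{4/N}h(s)$.

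Your worry about the regularity of $g$ also dissolves once you notice, as the paper does, that the corrected representation is just the purely algebraic identity
\[
\frac{g(s)}{s}=s^{4/N}h(s)+\frac{2G(s)}{s^{2}},
\]
combined with $G(s)/s^{2}=\int_0^s k(\sigma)\sigma^{-3}\,d\sigma$; the latter only requires differentiating $G(s)/s^{2}$, and $G$ is $\mathcal{C}^1$ because $g$ is merely continuous. The paper in fact stops one step earlier: it checks directly that $\frac{\partial}{\partial s}\left(G(s)/s^{2}\right)=s^{-3}\left(sg(s)-2G(s)\right)>0$ by $(A_1)$, so both summands in the displayed identity are increasing and the first tends to $+\infty$. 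No integration by parts, no limit of $\phi$ at $0$, and no differentiability of $g$ are needed; you should restructure your proof along those lines rather than trying to repair the integral representation.
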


\begin{proof}[Proof of Lemma \ref{lem:tech}]
From the definition of $h(s)$ we have
\begin{equation}\label{cond1}
\frac{g(s)}{s}=s^{4/N}h(s)+\frac{2G(s)}{s^2}.
\end{equation}
Furthermore, for $s>0$
\begin{equation}\label{cond2}
\frac{\partial}{\partial
s}\left(\frac{G(s)}{s^2}\right)=\frac{s(sg(s)-2G(s))}{s^4}> 0
\end{equation}
where the last inequality follows from $(A_1)$. Thus, combining
(\ref{cond1}), (\ref{cond2}) and $(A_1)$ we get (\ref{cond3}) and (\ref{cond4}).
\end{proof}

\begin{proof}[Proof of Lemma \ref{lem:existencegroundstate}]
It follows from Lemma \ref{lem:tech} that 
\begin{itemize}
\item[(P)] There exists $s_0 >0$ such that 
\begin{itemize}
\item if $N \geq 2$, then $\frac{1}{2}\omega s_0^2 < G(s_0)$; 
\item if $N=1$, then $\frac{1}{2}\omega s^2 > G(s)$ for $s \in (0,s_0)$,
$\frac{1}{2}\omega s_0^2 = G(s_0)$ and $\omega s_0 < g(s_0).$
\end{itemize}
\end{itemize}
Now, from \cite[Théorème 1]{bgk} and \cite[Theorem 1]{bl} we know that the conditions $(A_0)$ and (P) are sufficient to insure the existence of a ground state.

If $v$ is a solution of (\ref{snlsgg}), then $ S'(v)v = I(v)=0$;
therefore, to prove the lemma it is enough to show that $\dw\geq
m$. From \cite{jt1,jt2} we know that under $(A_0)$ and (P) the
functional $S$ has a mountain pass geometry. More precisely, if we set 
$$
\G:=\{\k\in\mathcal{C}([0,1],\hu);\k(0)=0,S(\k(1))<0\},
$$
then $\G \neq \emptyset$ and 
$$
c:=\inf_{\k\in\G}\max_{t\in[0,1]} S(\k(t))>0.
$$
In addition it is shown\footnote{In fact, the results of \cite{jt1,jt2} are proved only for real valued functions; however, it is not hard to see that they can be extended to the complex case (see \cite[Lemma 14]{jl}).}
 in \cite{jt1,jt2}
that $$m=c.$$
Namely the \emph{mountain pass level} $c$ corresponds to the ground
state level $m$. Now it is well-known that (\ref{cond3}) ensure that
if $v\in\hu$ satisfies $I(v)=0$ then $t\mapsto S(tv)$ achieves its
unique maximum on $[0,+\infty)$ at $t=1$. Also (\ref{cond4}) shows
that $\lim_{t\goesto+\infty}S(tv)=-\infty$. From the definition of
$c$, it implies that $c\leq S(v)$ for all $v\in\hu$ such that $I(v)=0$.
Hence we have
$$
\dw\geq c,
$$
and combined with the fact that $m=c$ it ends the proof.
\end{proof}

Now we investigate the behavior of the functionals under some
rescaling
\begin{lem}\label{lemscaling}
Assume that $(A_0)$ and $(A_1)$ hold. For $\l>0$ and $v\in\hu$, we
define $\vl(\,\cdot\,):=\l^{\frac{N}{2}}v(\l\,\cdot\,)$. We suppose $Q(v)\leq 0$.
Then there exists $\lo\leq 1$ such that
\begin{enumerate}[{\rm(i)}]
\item $Q(\vlo)=0$,
\item $\lo=1$ if and only if $Q(v)=0$,
\item $\frac{\partial}{\partial \l}S(\vl)>0$ for $\l\in(0,\lo)$ and $\frac{\partial}{\partial \l}S(\vl)<0$ for $\l\in(\lo,+\infty)$,
\item $\l\mapsto S(\vl)$ is concave on $(\lo,+\infty)$,
\item $\frac{\partial}{\partial \l}S(\vl)=\frac{1}{\l}Q(\vl)$.
\end{enumerate}
\end{lem}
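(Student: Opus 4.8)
The plan is to reduce the whole lemma to the study of a single scalar function of $\l$. With $\vl(x)=\lnd v(\l x)$, the change of variables $y=\l x$ gives immediately $\nldd{\nabla\vl}=\l^2\nldd{\nabla v}$ and $\nldd{\vl}=\nldd{v}$. Using $(A_1)$ to write $sg(s)-2G(s)=s^{2+4/N}h(s)$, together with $(\lnd)^{2+4/N}=\l^{N+2}$ and the same change of variables, one gets
$$\intrn\big(g(|\vl|)|\vl|-2G(\vl)\big)\,dx=\l^2\intrn|v|^{2+\frac4N}h(\lnd|v|)\,dx,$$
and hence $Q(\vl)=\l^2P(\l)$, where
$$P(\l):=\nldd{\nabla v}-\frac N2\intrn|v|^{2+\frac4N}h(\lnd|v|)\,dx.$$
We may assume $v\not\equiv0$, so that $\nldd{\nabla v}>0$; the growth restrictions in $(A_0)$ and the Sobolev embedding ensure this integral is finite for every $\l>0$.

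Step one is assertion~(v). Writing $S(\vl)=\frac12\l^2\nldd{\nabla v}+\frac\w2\nldd{v}-\l^{-N}\intrn G(\lnd v)\,dx$ and differentiating in $\l$ (differentiation under the integral sign being justified by $(A_0)$), one collects the terms and recognizes $\frac{\partial}{\partial\l}S(\vl)=\frac1\l Q(\vl)$; combined with $Q(\vl)=\l^2P(\l)$ this reads $\frac{\partial}{\partial\l}S(\vl)=\l P(\l)$.

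Step two settles (i)--(iii). The function $h$ is continuous (as $g$ is) and, by $(A_1)$, strictly increasing with $\lim_{s\to0}h(s)=0$; hence for fixed $x$ the map $\l\mapsto|v(x)|^{2+4/N}h(\lnd|v(x)|)$ is non-decreasing, and strictly increasing whenever $v(x)\neq0$, so $P$ is continuous and strictly decreasing on $(0,+\infty)$. Monotone convergence gives $P(\l)\to\nldd{\nabla v}>0$ as $\l\to0^+$, while $P(1)=Q(v)\leq0$ by hypothesis; the intermediate value theorem then yields a unique $\lo\in(0,1]$ with $P(\lo)=0$, i.e. $Q(\vlo)=0$, and $\lo=1$ precisely when $P(1)=0$, i.e. when $Q(v)=0$. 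Finally $P>0$ on $(0,\lo)$ and $P<0$ on $(\lo,+\infty)$, so by~(v) the sign of $\frac{\partial}{\partial\l}S(\vl)$ is as claimed in~(iii).

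The only delicate point is the concavity~(iv), since $h$ is not assumed differentiable and a second-derivative computation is unavailable. The idea is to substitute the identity $P(\lo)=0$, i.e. $\nldd{\nabla v}=\frac N2\intrn|v|^{2+\frac4N}h(\lo^{\frac N2}|v|)\,dx$, back into $\frac{\partial}{\partial\l}S(\vl)=\l P(\l)$, which gives, for $\l>\lo$,
$$\frac{\partial}{\partial\l}S(\vl)=\frac N2\intrn\l\,|v|^{2+\frac4N}\big(h(\lo^{\frac N2}|v|)-h(\lnd|v|)\big)\,dx.$$
For each fixed $x$ the map $\l\mapsto\l\big(h(\lo^{\frac N2}|v(x)|)-h(\lnd|v(x)|)\big)$ is the product of the increasing nonnegative function $\l$ with a non-increasing function that is $\leq0$ on $[\lo,+\infty)$, hence is itself non-increasing there; integrating against the nonnegative weight $|v|^{2+4/N}$ preserves this, so $\frac{\partial}{\partial\l}S(\vl)$ is non-increasing on $(\lo,+\infty)$, which is exactly the concavity of $\l\mapsto S(\vl)$ on that interval. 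I expect this substitution to be the crux; the remaining ingredients — the scaling identities, differentiation under the integral sign, and finiteness of the nonlinear integrals — are routine consequences of $(A_0)$ and $(A_1)$.
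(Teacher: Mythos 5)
Your proof is correct and follows essentially the same route as the paper: the change of variables reduces everything to $\frac{\partial}{\partial\l}S(\vl)=\l P(\l)$ with $P(\l)=\nldd{\nabla v}-\frac{N}{2}\intrn|v|^{2+4/N}h(\lnd|v|)\,dx$ strictly decreasing by $(A_1)$, which is exactly the paper's displayed computation. Your treatment of (iv), substituting $P(\lo)=0$ and arguing pointwise, is an equivalent (slightly more elaborate) version of the paper's direct observation that $\l P(\l)$ is decreasing on $(\lo,+\infty)$ because $\l>0$ is increasing and $P$ is negative and decreasing there.
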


\begin{proof}[Proof of Lemma \ref{lemscaling}]
Easy computations lead to
\begin{eqnarray*}
\frac{\partial}{\partial \l}S(\vl)& = & \frac{1}{\l}Q(\vl)\\
&=& \l\left( \nldd{\nabla v}-\frac{N}{2}\intrn
\lambda^{-(N+2)}\left(\lnd g(\lnd |v|)|v|-2G(\lnd v)\right)dx
\right),
\end{eqnarray*}
and recalling from $(A_1)$ that the function
$h(s):=(sg(s)-2G(s))s^{-(2+4/N)}$ is strictly increasing on $[0,+\infty)$,
(i),(ii),(iii) and (v) follow easily. To see (iv), we remark that
since
$$
\left( \nldd{\nabla v}-\frac{N}{2}\intrn \lambda^{-(N+2)}
\left(\lnd g(\lnd |v|)|v|-2G(\lnd v)\right)dx \right)<0
$$
on $(\lo,+\infty)$, we infer from $(A_1)$ that $\frac{\partial}{\partial \l}S(\vl)$ is strictly
decreasing on $(\lo,+\infty)$, which implies (iv).
\end{proof}

\begin{proof}[Proof of Theorem \ref{bc1thm}]
We recall that
$$
\M=\{ v\in\hu\setminus\{0\};Q(v)=0,I(v)\leq 0 \},
$$
and define
$$
\dM:=\inf\{ S(v);v\in \M \}.
$$
We proceed in three steps.\\
{\sc Step 1.} Let us prove $\dw=\dM$. Since the ground states $\f$ satisfy $Q(\f)=I(\f)=0$, we have $\f\in\M$. Combined with $S(\f)=\dw$, this implies $\dM\leq\dw$. Conversely, let $v\in\M$. If $I(v)=0$, then trivially $S(v)\geq\dw$, thus we suppose $I(v)<0$.
We use the rescaling defined in Lemma \ref{lemscaling} : for $\l>0$ we have
$$
I(\vl)=\l^2\nldd{\nabla v}+\w\nldd{v}-\intrn\l^{-N/2}g(\l^{N/2} |v|)|v|dx.
$$
It follows from $(A_0)$-(b) that $\lim_{\l\goesto
0}I(\vl)=\w\nldd{v}$ and thus by continuity there exists $\lu<1$
such that $I(\vlu)=0$. Thus $S(\vlu)\geq \dw$. Now, from $Q(v)=0$
and (iii) in Lemma \ref{lemscaling} we have
$$S(v)\geq S(\vlu)\geq \dw,$$ hence $\dM=\dw$.

\medskip

{\sc Step 2.} For $\l>0$, we set $\ul:=\fl$. For $\l>1$ close to
$1$, we have
\begin{eqnarray}
&&S(\ul)<S(\f)\mbox{ and }Q(\ul)<0,\label{36}\\
&&I(\ul)<0\label{37}.
\end{eqnarray}
Indeed, (\ref{36}) follows from (iii) and (v) in Lemma \ref{lemscaling}. For (\ref{37}), we write
\begin{eqnarray*}
I(\ul)&=&2S(\ul)+\frac{2}{N}Q(\ul)-\frac{2}{N}\nldd{\nabla \ul}\\
&\leq&2S(\f)+\frac{2}{N}Q(\f)-I(\f)-\frac{2\l^2}{N}\nldd{\nabla \f}\\
&\leq&\frac{2(1-\l^2)}{N}\nldd{\nabla \f}<0.
\end{eqnarray*}

Let $u(t)$ be the solution of (\ref{nlsgg}) with $u(0)=\ul$. We
claim that the properties described in (\ref{36}), (\ref{37}) are
invariant under the flow of (\ref{nlsgg}). Indeed, since from
(\ref{conservationenergyg}) we have for all $t>0$
\begin{equation}\label{prop2}
S(u(t))=S(\ul)<S(\f),
\end{equation}
we infer that  $I(u(t))\neq 0$ for any $t\geq 0$, and by
continuity  we have $I(u(t))< 0$ for all $t\geq 0$. It follows
that $Q(u(t))\neq 0$ for any $t\geq0$ (if not $u(t)\in \M$ and
thus $S(u(t)) \geq S(\f)$ which contradicts (\ref{prop2})), and by
continuity we have $Q(u(t))<0$ for all $t\geq 0$. Thus for all
$t>0$ we have
$$
S(u(t))<S(\f),I(u(t))<0\mbox{ and }Q(u(t))<0.
$$

\medskip

{\sc Step 3.} We fix $t>0$ and define $v:=u(t)$. For $\b>0$, let $\vb(x):=\b^{\frac{N}{2}}v(\b x)$. From {\sc Step 2} we have $Q(v)<0$, thus from Lemma \ref{lemscaling} there exists $\bo<1$ such that $Q(\vbo)=0$. If $I(\vbo)\leq 0$, we keep $\bo$, otherwise we replace it by $\bot\in(\bo,1)$ such that $I(\vbot)=0$. Thus in any case we have
\begin{equation}\label{40}
S(\vbo)\geq \dw
\end{equation}
and $Q(\vbo)\leq 0$. Now from (iv) in Lemma \ref{lemscaling}, we have
$$
S(v)-S(\vbo)\geq (1-\bo)\frac{\partial}{\partial \b}S(\vb)_{|\b=1}.
$$
Thus, from (v) in Lemma \ref{lemscaling}, $Q(v)<0$ and $\bo<1$, we get
$$
S(v)-S(\vbo)\geq Q(v).
$$
Combined with (\ref{40}), this gives
\begin{equation}\label{41}
Q(v)\leq S(v)-\dw:=-\d<0
\end{equation}
where $\d$ is independent of $t$ since $S$ is a conserved quantity.

To conclude, it suffices to observe that thanks to (\ref{virial}) and (\ref{41}) we have
\begin{equation}\label{42}
\nldd{xu(t)}\leq -\d t^2+Ct+\nldd{x\ul},
\end{equation}
and since the right hand side of (\ref{42}) becomes negative when $t$ grows up, we easily deduce that $T_{\ul}<+\infty$ and
$
\lim_{t\goesto T_{\ul}}\nld{\nabla u(t)}=+\infty.
$
\end{proof}

\textbf{Acknowledgements.} The author is grateful to Louis Jeanjean for helpful advice during the writing of this paper. He also wishes to thank Thierry Cazenave for sharing with him a digitalized version of the unpublished paper \cite{bc2}.


\begin{thebibliography}{99}

\footnotesize



\bibitem{bc1}
{\sc H. Berestycki and T. Cazenave,} {\em Instabilit\'e des
\'etats stationnaires dans les \'equations de Schr\"odinger et de
Klein-Gordon non lin\'eaires}, {C. R. Acad. Sci. Paris} {\bf 293},
(1981), 489--492.

\bibitem{bc2}
{\sc H. Berestycki and T. Cazenave,} {\em Instabilit\'e des
\'etats stationnaires dans les \'equations de Schr\"odinger et de
Klein-Gordon non lin\'eaires}, {Publications du Laboratoire
d'Analyse Num\'{e}rique,} Universit\'e de Paris VI.

\bibitem{bgk}
{\sc H. Berestycki, T. Gallouet and O. Kavian,}
{\em \'Equations de champs scalaires euclidiens non lin\'eaires dans le plan,}
C. R. Acad. Sci. Paris \textbf{297}, (1983), 307-310.

\bibitem{bl}
{\sc H. Berestycki and P.L. Lions,} {\em Nonlinear \ scalar \  field\ equations
I,} {Arch. Ration. Mech. Anal.,} {\bf 82}, (1983), 313--346.

\bibitem{c}
{\sc T. Cazenave,} {\em Semilinear Schr{\"o}dinger equations},
Courant Lecture Notes in Mathematics, {\bf 10}, (2003).

\bibitem{cl}
{\sc T. Cazenave and P.L. Lions,} {\em Orbital stability of standing waves for
some nonlinear Schr\"odinger equations}, {Comm. Math. Phys.}, {\bf 85}, 4,
(1982), 549--561.

\bibitem{cw}
{\sc T. Cazenave and F.B. Weissler}, {\em The Cauchy problem for
the nonlinear Schr\"odinger equation in $H^1$}, Manuscripta Math.
{\bf 61}, (1988), 477-494.


\bibitem{gv}
{\sc J. Ginibre and G. Velo,} {\em On a class of nonlinear
{S}chr\"odinger equations. {I}. {T}he {C}auchy problem, general
case}, {J. Func. Anal.} {\bf 32}, 1, (1979), 1--32.

\bibitem{jl}
{\sc L. Jeanjean and S. Le Coz,}
{\em Instability for standing waves of nonlinear Klein-Gordon equations via mountain-pass arguments},
\textsf{preprint}.

\bibitem{jt1}
{\sc L. Jeanjean and K. Tanaka,}
{\em A note on a mountain pass characterization of least energy solutions},
Adv. Nonlinear Stud. \textbf{3}, 4, (2003), 445--455.

\bibitem{jt2}
{\sc L. Jeanjean and K. Tanaka,}
{\em A remark on least energy solutions in $R\sp N$},
Proc. Amer. Math. Soc. \textbf{131}, 8, (2003), 2399--2408.

\bibitem{lffks}
{\sc S. Le Coz, R. Fukuizumi, G. Fibich, B. Ksherim and Y. Sivan,}
{\em Instability of bound states of a nonlinear Schrödinger equation with a Dirac potential}, preprint.

\bibitem{l1}
{\sc Y. Liu,}
{\em Blow up and instability of solitary-wave solutions to a generalized Kadomtsev-Petviashvili equation},
Trans. Amer. Math. Soc. \textbf{353}, 1, (2001), 191--208.

\bibitem{l2}
{\sc Y. Liu,}
{\em Strong instability of solitary-wave solutions to a Kadomtsev-Petviashvili equation in three dimensions},
J. Differential Equations \textbf{180}, 1, (2002), 153--170.

\bibitem{lww}
{\sc Y. Liu, X.-P. Wang and K. Wang,}
{\em Instability of standing waves of the Schr\"{o}dinger equation with inhomogeneous nonlinearity},
Trans. Amer. Math. Soc. \textbf{358}, (2006), 2105--2122.


\bibitem{ot}
{\sc M. Ohta and G. Todorova,}
{\em Strong instability of standing waves for nonlinear {K}lein-{G}ordon equations},
Discrete Contin. Dyn. Syst.
\textbf{12}, 2, (2005), 315--322.


\bibitem{z}
{\sc J. Zhang,} {\em Sharp threshold for blowup and global existence in nonlinear
Schr\"{o}dinger equations under a harmonic potential,} {Comm. Partial Differential Equations}, {\bf 30}, 10-12, (2005), 1429--1443.



\end{thebibliography}
\end{document}